\newtheorem{thm}{Theorem}
\newdefinition{dfn}{Definition}
\newdefinition{ex}{Example}
\newproof{proof}{Proof}
\begin{document}

\begin{frontmatter}



\title{Asymptotic formulas for eigenvalues and eigenfunctions  of a new boundary-value-transmission problem }


\author[rvt]{O. Sh. Mukhtarov\corref{cor1}}
\ead{omukhtarov@yahoo.com} \cortext[cor1]{Corresponding Author (Tel:
+90 356 252 16 16, Fax: +90 356 252 15 85)}
\author[rvt]{K. Aydemir}
\ead{kadriye.aydemir@gop.edu.tr}

\address[rvt]{Department of Mathematics, Faculty of Arts and Science, Gaziosmanpa\c{s}a University,\\
 60250 Tokat, Turkey}

\begin{abstract}
In this paper we are concerned with a new class of BVP' s consisting
of eigendependent boundary conditions and two supplementary
transmission conditions at one interior point. By modifying some
techniques of classical Sturm-Liouville theory
 and  suggesting own approaches we find asymptotic formulas for the eigenvalues and eigenfunction.
\end{abstract}

\begin{keyword}
Sturm-Liouville problems, eigenvalue, eigenfunction, asymptotics of
eigenvalues and eigenfunction.


\end{keyword}

\end{frontmatter}


\section{Introduction}
Boundary value problems arise directly as mathematical models of
motion according to Newton's law, but more often as a result of
using the method of separation of variables to solve the classical
partial differential equations of physics, such as Laplace's
equation, the heat equation, and the wave equation. Many topics in
mathematical physics require investigations of eigenvalues and
eigenfunctions of boundary value problems.  These investigations are
of utmost importance for theoretical and applied problems in
mechanics, the theory of vibrations and stability, hydrodynamics,
elasticity, acoustics, electrodynamics, quantum mechanics, theory of
systems and their optimization, theory of random processes, and many
other branches of natural science. Such problems are formulated in
many different ways.

In this study we shall investigate a new class of Sturm-Liouville
type problem which consist of a Sturm-Liouville equation contained
\begin{equation}\label{1}
\mathcal{L}(y):=-p(x)y^{\prime \prime }(x)+ q(x)y(x)=\mu^{2}y(x)
\end{equation}
to hold in finite interval $(a, b)$ except at one inner point $c \in
(a, b) $, where discontinuity in $u  \ \textrm{and} \ u'$   are
prescribed by two-point eigenparameter- dependent boundary
conditions
\begin{equation}\label{2}
V_{1}(y):=\alpha_{10}y(a)-\alpha_{11}y'(a)-\mu^{2}(\alpha'_{10}y(a)-\alpha'_{11}y'(a))=0,
\end{equation}
\begin{equation}\label{3}
V_{2}(y):=\alpha_{20}y(b)-\alpha_{21}y'(b)+\mu^{2}(\alpha'_{20}y(b)-\alpha'_{21}y'(b))=0,
\end{equation}
 together with the transmission conditions
\begin{equation}\label{4}
V_{j}(y):=\beta^{-}_{j1}y'(c-)+\beta^{-}_{j0}y(c-)+\beta^{+}_{j1}y'(c+)+\beta^{+}_{j0}y(c+)=0,
\ \ j=1,2
\end{equation}
where $p(x)=p^{-}>0 \ \textrm{for} \ x \in [a, c) $, $p(x)=p^{+}>0 \
\textrm{for} \ x \in (c, b], $ the potential $q(x)$ is real-valued
function which continuous in each of the intervals $[a, c) \
\textrm{and} \ (c, b]$, and has a finite limits $q( c\mp0)$,
$\lambda$ \ is a physical  complex parameter, \ $\alpha_{ij}, \ \
\beta^{\pm}_{ij}, \ \alpha'_{ij} \ (i=1,2 \ \textrm{and} \ j=0,1)$
are real numbers. We describe some analytical solutions of the
problem and find asymptotic formulas of eigenvalues and
eigenfunctions. These boundary conditions are of great importance
for theoretical and applied studies and have a definite mechanical
or physical meaning (for instance, of free ends). Also the problems
with transmission conditions  arise in mechanics, such as thermal
conduction problems for a thin laminated plate, which studied in
\cite{tik}. This class of problems essentially differs from the
classical case, and its investigation requires a specific approach
based on the method of separation of variables. Moreover the
eigenvalue parameter appear in one of the boundary conditions and
two new conditions added to boundary conditions called transmission
conditions.
\section{\textbf{The fundamental solutions and characteristic
Function } }
 Let $B_{0}= \left[%
\begin{array}{cccc}
  \alpha_{11} & \alpha_{10}  \\
  \alpha'_{11} & \alpha'_{10}
  \\
\end{array} %
 \right]$,  \  $B_{1}= \left[%
\begin{array}{cccc}
  \alpha_{21} & \alpha_{20}  \\
  \alpha'_{21} & \alpha'_{20}
  \\
\end{array} %
 \right]$ \ and T=$ \left[%
\begin{array}{cccc}
  \beta^{-}_{10} & \beta^{-}_{11} & \beta^{+}_{10} & \beta^{+}_{11} \\
  \beta^{-}_{20} & \beta^{-}_{21} & \beta^{+}_{20} & \beta^{+}_{21}
  \\
\end{array} %
 \right]. $
 Denote  the determinant of the matrix $B_{0}$ by $\theta_{1}$, the determinant of the matrix $B_{1}$ by $\theta_{2}$  and the determinant of the k-th
 and
j-th columns of the matrix T  by $\Delta_{kj}$. Note that throughout
this study we shall assume that $ \theta_{1}>0, \ \theta_{2}>0, \
\Delta_{12}>0 \ \ \textrm{and} \ \Delta_{34}>0.$
 With a view to constructing the
characteristic function we define  two solution $\psi(x,\mu) \
\textrm{and}  \ \varphi(x,\mu)$ as follows. Denote the solutions of
the equation \ref{1} satisfying the initial conditions
\begin{equation}\label{7}
y(a)=\alpha _{11}-\mu^{2}\alpha' _{11}, \ y^{\prime }(a)=\alpha
_{10}-\mu^{2}\alpha' _{10}
\end{equation}
and
\begin{equation}\label{10}
y(b)=\alpha _{21}+\mu^{2}\alpha' _{21}, \ y^{\prime }(b)=\alpha
_{20}+\mu^{2}\alpha' _{20}
\end{equation}
by $u=\varphi^{-}(x,\mu)$ and $u=\psi^{+}(x,\mu),$ respectively. It
is known that the initial-value problems has an unique solutions \
$u=\varphi^{-}(x,\mu)$ and $u=\psi^{+}(x,\mu),$ \ which is an entire
function of $\mu \in \mathbb{C}$ for each fixed $x \in [a,c)$  and
$x \in (c,b]$ respectively. (see, for example, \cite{titc}). Using
this solutions we can prove that the equation (\ref{1}) on $ [a,c)$
and $x \in (c,b]$ has solutions $u=\varphi^{+}(x,\mu)$ and
$u=\psi^{-}(x,\mu),$ satisfying the initial conditions
\begin{eqnarray}\label{8}
&&y(c) =\frac{1}{\Delta_{12}}(\Delta_{23}\varphi^{-}(c,\mu
)+\Delta_{24}\frac{\partial\varphi^{-}(c,\mu )}{\partial x})\\
&& \label{9} y^{\prime }(c)
=\frac{-1}{\Delta_{12}}(\Delta_{13}\varphi ^{-}(c,\mu
)+\Delta_{14}\frac{\partial\varphi^{-}(c,\mu )}{\partial x}).
\end{eqnarray}
and
\begin{eqnarray}\label{11}
&&y(c) =\frac{-1}{\Delta_{34}}(\Delta_{14}\psi^{+}(c,\mu
)+\Delta_{24}\frac{\partial\psi^{+}(c,\mu )}{\partial x}),\\
&& \label{12} y^{\prime }(c) =\frac{1}{\Delta_{34}}(\Delta_{13}\psi
^{+}(c,\mu )+\Delta_{23}\frac{\partial\psi^{+}(c,\mu )}{\partial
x}).
\end{eqnarray}
respectively. Consequently, the solution $u=\varphi(x,\mu)$ defined
by
\begin{eqnarray} \varphi(x,\mu)=\left
\{\begin{array}{ll}\label{91}
\varphi^{-}(x,\mu), & x\in \lbrack a,c) \\
\varphi^{+}(x,\mu), & x\in (c,b\rbrack \\
\end{array}\right.
\end{eqnarray}
 satisfies the equation (\ref{1}) on  $[a,c) \cup (c,b]$, the
 first boundary condition  (\ref{2}) and both transmission
 conditions
 (4).\\
 Similarly,  the solution $\psi(x,\mu)$ defined by
\begin{eqnarray} \psi(x,\mu)=\left
\{\begin{array}{ll}\label{92}
\psi^{-}(x,\mu), & x\in \lbrack a,c) \\
\psi^{+}(x,\mu), & x\in (c,b\rbrack  \\
\end{array}\right.
\end{eqnarray}
 satisfies the equation (\ref{1}) on
whole $[a,c) \cup (c,b]$, the second boundary condition of (\ref{3})
and both transmission condition  (\ref{4}).
 Again, similarly to
\cite{os1} it can be proven that, there are infinitely many
eigenvalues $\mu_{n}, \ n=1,2,...$   of the BVTP
$(\ref{1})-(\ref{4})$ which are coincide with the zeros of
characteristic function  \ $w(\mu)$.

\section{  Some asymptotic approximation formulas
for fundamental solutions}
 Below, for
shorting  we shall use also  notations; $\varphi ^{\pm}(x,\mu
):=\varphi^{\pm} _{\mu}(x), \psi ^{\pm}(x,\mu
):=\psi^{\pm}_{\mu}(x).$ By applying the method of variation of
parameters we can prove that the next integral and
integro-differential equations are hold for $k=0$ and $k=1.$%
\begin{eqnarray*}\label{(4.2)}
&&\frac{d^{k}}{dx^{k}}\varphi ^{-}_{\mu}(x ) =
\sqrt{p^{-}}\frac{(\alpha _{10}-\mu^{2}\alpha'
_{10})}{\mu}\frac{d^{k}}{dx^{k}}\sin \left[\frac{\mu\left(
x-a\right)}{\sqrt{p^{-}}}\right]\nonumber \\
&&+(\alpha _{11}-\mu^{2}\alpha' _{11})\frac{d^{k}}{dx^{k}}\cos
\left[\frac{\mu\left( x-a\right)}{\sqrt{p^{-}}}\right]
+\frac{1}{\sqrt{p^{-}}\mu}\int\limits_{a}^{x}\frac{d^{k}}{dx^{k}}\sin
\left[\frac{\mu\left( x-z\right)}{\sqrt{p^{-}}}\right] q(z)\varphi
^{-}_{\mu}(z)dz
\end{eqnarray*}
\begin{eqnarray}
\frac{d^{k}}{dx^{k}}\psi ^{-}_{\mu}(x )
&=&-\frac{1}{\Delta_{34}}(\Delta_{14}\psi ^{+}(c,\mu )+\Delta_{24}
\frac{\partial\psi^{+}(c,\mu )}{\partial x}))
\frac{d^{k}}{dx^{k}}\cos \left[
\frac{\mu(x-c)}{\sqrt{p^{-}}}\right] \nonumber \\
&&+\frac{\sqrt{p^{-}}}{\mu \Delta_{34}}(\Delta_{13}\psi ^{+}(c,\mu
)+\Delta_{23}\frac{\partial\psi^{+}(c,\mu )}{\partial
x}))\frac{d^{k}}{dx^{k}}\sin \left[ \frac{\mu(x-c)}{\sqrt{p^{-}}}
\right]  \nonumber  \\
&&+\frac{1}{\sqrt{p^{-}}\mu}\int\limits_{x}^{c-}\frac{d^{k}}{dx^{k}}\sin
\left[ \mu\left( x-z\right) \right] q(z)\psi ^{-}_{\mu}(z)dz
\label{(4.22)}
\end{eqnarray}
for $x \in [a,c)$ and
\begin{eqnarray}
\frac{d^{k}}{dx^{k}}\varphi^{+}_{\mu}(x )
&=&\frac{1}{\Delta_{12}}(\Delta_{23}\varphi ^{-}(c,\mu
)+\Delta_{24}\frac{\partial\varphi^{-}(c,\mu )}{\partial x}))
\frac{d^{k}}{dx^{k}}\cos \left[
\frac{\mu(x-c)}{\sqrt{p^{+}}}\right]  \nonumber  \\
&&-\frac{\sqrt{p^{+}}}{\mu \Delta_{12}}(\Delta_{13}\varphi
^{-}(c,\mu )+\Delta_{14}\frac{\partial\varphi^{-}(c,\mu )}{\partial
x}))\frac{d^{k}}{dx^{k}}\sin \left[ \frac{\mu(x-c)}{\sqrt{p^{+}}}
\right]  \nonumber \\
&&+\frac{1}{\sqrt{p^{+}}\mu}\int\limits_{c+}^{x}\frac{d^{k}}{dx^{k}}\sin
\left[ \frac{\mu\left( x-z\right)}{\sqrt{p^{+}}} \right] q(z)\varphi
^{+}_{\mu}(z)dz \label{(4.a)}
\end{eqnarray}
\begin{eqnarray*}
&&\frac{d^{k}}{dx^{k}}\psi^{+} _{\mu}(x )=
\frac{\sqrt{p^{+}}}{\mu}(\alpha _{20}+\mu^{ 2}\alpha'
_{20})\frac{d^{k}}{dx^{k}}\sin \left[ \frac{\mu\left(
x-b\right)}{\sqrt{p^{+}}} \right] \nonumber\\&&+(\alpha
_{21}+\mu^{2} \alpha' _{21})\frac{d^{k}}{dx^{k}} \cos \left[
\frac{\mu\left( x-b\right)}{\sqrt{p^{+}}} \right]
+\frac{1}{\sqrt{p^{+}}\mu}\int\limits_{x}^{b}\frac{d^{k}}{dx^{k}}\sin
\left[\frac{\mu\left( x-z\right)}{\sqrt{p^{+}}} \right] q(z)\psi
^{+}_{\mu}(z)dz \label{(4.21)}
\end{eqnarray*}
for $x \in (c,b]$. Now we are ready to prove the following theorems.

\begin{thm} \label{(4.n)}
Let , $Im \mu=t.$ Then \ if $\alpha' _{11}\neq 0$ the estimates
\begin{eqnarray}
\frac{d^{k}}{dx^{k}}\varphi ^{-}_{\mu}(x ) &=&-\alpha' _{11}\mu^{2}\frac{d^{k}}{dx^{k}}%
\cos \left[ \frac{\mu\left( x-a\right)}{\sqrt{p^{-}}} \right]
+O\left( \left| \mu\right| ^{k+1}e^{\frac{\left| t\right|
(x-a)}{\sqrt{p^{-}}}}\right)
\label{(4.3)} \\
\frac{d^{k}}{dx^{k}}\varphi^{+} _{\mu}(x ) &=&\frac{\Delta_{24}}{%
\Delta_{12}}\frac{\alpha' _{11}}{\sqrt{p^{-}}}\mu^{3}\sin
\left[\frac{ \mu\left( c-a\right) }{\sqrt{p^{-}}}\right]
\frac{d^{k}}{dx^{k}}\cos\left[ \frac{\mu\left( x-c\right)
}{\sqrt{p^{+}}}\right] \nonumber
\\
&&+O\left(|\mu| ^{k+2} e^{\left|
t\right|(\frac{(x-c)}{\sqrt{p^{+}}}+\frac{(c-a)}{\sqrt{p^{-}}})}\right)
\label{(4.4)}
\end{eqnarray}
 are valid as $\left| \mu \right| \rightarrow \infty $, while if $\alpha' _{11}=0$ the estimates
\begin{eqnarray}
\frac{d^{k}}{dx^{k}}\varphi^{-} _{\mu}(x ) &=&-\alpha'_{10}\sqrt{p^{-}}\mu%
\frac{d^{k}}{dx^{k}}\sin \left[ \frac{\mu(x-a)}{\sqrt{p^{-}}}\right]
+O\left( \left| \mu\right| ^{k}e^{\frac{\left| t\right|
(x-a)}{\sqrt{p^{-}}}}\right) \label{(4.5)}
\\
\frac{d^{k}}{dx^{k}}\varphi^{+} _{\mu}(x ) &=&-\frac{\Delta_{24}}{%
\Delta_{12}}\alpha' _{10}\mu^{2}\cos  \left[ \frac{\mu\left( c-a\right)}{\sqrt{p^{-}}} \right] \frac{d^{k}}{dx^{k}} \cos \left[ \frac{\mu(x-c)}{\sqrt{p^{+}}}\right]  \nonumber\\
&&+O\left( \left| \mu\right| ^{k+1}e^{\left|
t\right|(\frac{(x-c)}{\sqrt{p^{+}}}+\frac{(c-a)}{\sqrt{p^{-}}})}\right)
\label{(4.6)}
\end{eqnarray}
are valid as $\left| \mu \right| \rightarrow \infty $ ($k=0,1)$. All
asymtotic estimates  are uniform with respect to $x.$
\end{thm}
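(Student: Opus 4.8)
The plan is to work directly from the Volterra-type integral representation for $\varphi^-_\mu$ displayed above, treating the two trigonometric terms as the free part and the $q$-integral as a perturbation. First I would record the elementary bounds $|\sin \zeta|\le e^{|\mathrm{Im}\,\zeta|}$ and $|\cos \zeta|\le e^{|\mathrm{Im}\,\zeta|}$, which give $|\sin[\mu(x-z)/\sqrt{p^-}]|\le e^{|t|(x-z)/\sqrt{p^-}}$ for $a\le z\le x$ and likewise for the cosine. Since, when $\alpha'_{11}\neq 0$, the free part is dominated by the $-\mu^2\alpha'_{11}\cos$ term and is $O(|\mu|^2 e^{|t|(x-a)/\sqrt{p^-}})$, the first real task is a crude a priori bound. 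To get it I set $G(x):=|\varphi^-_\mu(x)|\,e^{-|t|(x-a)/\sqrt{p^-}}$, insert the exponential bounds into the integral equation with $k=0$, and factor the common weight out of the kernel-times-solution product; this yields $G(x)\le C|\mu|^2+\frac{1}{\sqrt{p^-}|\mu|}\int_a^x|q(z)|G(z)\,dz$, whence Gronwall gives $G(x)\le C|\mu|^2\exp(C'/|\mu|)\le 2C|\mu|^2$ for large $|\mu|$, i.e. $|\varphi^-_\mu(x)|=O(|\mu|^2 e^{|t|(x-a)/\sqrt{p^-}})$ uniformly in $x$.

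Feeding this bound back into the $q$-integral then pins down the perturbation. For $k=0$ it is bounded by $\frac{C|\mu|^2}{\sqrt{p^-}|\mu|}e^{|t|(x-a)/\sqrt{p^-}}\int_a^x|q(z)|\,dz=O(|\mu|\,e^{|t|(x-a)/\sqrt{p^-}})$, which is precisely the asserted error $O(|\mu|^{k+1}e^{\cdots})$; the subdominant free terms (the $\alpha_{11}\cos$ term and the $O(|\mu|)$ sine term) are absorbed at the same order. For $k=1$, differentiating the kernel costs one factor of $\mu$ but generates no boundary term, since $\sin[\mu(x-x)/\sqrt{p^-}]=0$, so the integral is $O(|\mu|^2 e^{\cdots})=O(|\mu|^{k+1}e^{\cdots})$, again matching. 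This establishes the first asserted estimate. The case $\alpha'_{11}=0$ is structurally identical: now the $-\mu\,\alpha'_{10}\sqrt{p^-}\sin$ term dominates, the a priori bound sharpens to $O(|\mu|\,e^{\cdots})$, and the perturbation drops to $O(|\mu|^k e^{\cdots})$, giving the second estimate.

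For the $\varphi^+$ asymptotics I would use the integro-differential equation for $\varphi^+_\mu$ displayed above, whose ``initial data'' at $c$ are the quantities $\varphi^-(c,\mu)$ and $\partial_x\varphi^-(c,\mu)$. Substituting the estimates just proved, evaluated at $x=c$ (in particular $\partial_x\varphi^-(c,\mu)=\frac{\alpha'_{11}}{\sqrt{p^-}}\mu^3\sin[\mu(c-a)/\sqrt{p^-}]+O(|\mu|^2 e^{\cdots})$ when $\alpha'_{11}\neq 0$), the dominant contribution is the $\Delta_{24}\partial_x\varphi^-(c,\mu)$ piece carried by $\frac{1}{\Delta_{12}}\cos[\mu(x-c)/\sqrt{p^+}]$, which is of order $|\mu|^3$ and reproduces the leading term of the claimed $\varphi^+$ formula. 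Every remaining term — the $\Delta_{23}\varphi^-(c,\mu)$ contribution at $O(|\mu|^2)$, the full sine block carrying an extra $1/\mu$, and the $q$-integral over $[c,x]$ controlled by a second Gronwall estimate for $\varphi^+$ — lands in the error $O(|\mu|^{k+2}e^{\cdots})$. The $\alpha'_{11}=0$ variant follows the same route, with the now-dominant $O(|\mu|^2)\cos$ initial datum driving the leading order.

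I expect the main obstacle to be the bookkeeping across the transmission point: the initial data at $c$ are themselves $\mu$-dependent with their own asymptotic remainders, and one must check that after multiplication by the $\Delta_{kj}$ coefficients and the trigonometric factors in the $\varphi^+$ equation these propagated errors stay strictly below the new leading order $|\mu|^{k+2}$ (respectively $|\mu|^{k+1}$). Equally delicate is combining the exponential weights $e^{|t|(c-a)/\sqrt{p^-}}$ and $e^{|t|(x-c)/\sqrt{p^+}}$ correctly, so that the final remainder carries exactly the product weight $e^{|t|((x-c)/\sqrt{p^+}+(c-a)/\sqrt{p^-})}$ appearing in the stated $\varphi^+$ estimates, uniformly in $x$.
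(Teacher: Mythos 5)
Your proposal is correct and follows essentially the same route as the paper: an a priori bound on the solution obtained from the Volterra integral equation (the paper gets it by citing Titchmarsh's Lemma 1.7 for $\varphi^-_\mu$ and by bounding the weighted maximum $Y(\mu)=\max|e^{-|t|(x-a)}\varphi^+_\mu(x)|$ via the integral inequality, which is the same device as your Gronwall step), followed by re-substitution into the integral equation to isolate the leading trigonometric term and push everything else into the stated remainder. The only difference is that you prove the $\varphi^-$ bound explicitly rather than quoting Titchmarsh, which does not change the substance of the argument.
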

\begin{proof}
The asymptotic formulas for $(\ref{(4.3)})$ in $(\ref{(4.4)})$
follows immediately from the Titchmarsh's Lemma on the asymptotic
behavior of $\varphi^{-}
_{\mu }(x)$ (\cite{titc}, Lemma 1.7). But the corresponding formulas for $%
\varphi^{+} _{\mu}(x )$ need individual consideration. Let $\alpha
_{11}\neq 0$. Put $\varphi ^{+}_{\mu}(x )=e^{-\left| t\right| (x-a)}
Y(x,\mu )$ it follows from $(\ref{(4.a)})$ that
\begin{eqnarray}
Y(x,\mu ) &=&\frac{1}{\Delta_{12}}\alpha _{11}e^{-\left| t\right|
(x-a)} \big[\Delta_{23}\cos s(c-a)\cos
s(x-c)-\Delta_{24}s \sin s(c-a)%
\cos s(x-c) \nonumber \\&&-\frac{\Delta_{13}}{s}\cos s (c-a) \sin
s(x-c)+\frac{\Delta_{14}}{s}\sin s(c-a) \sin s(x-c)
\big] \nonumber \\
&&+\frac{1}{s}\int\limits_{c+}^{x}\sin \left[ s(x-z)\right] q(z)
e^{-\left| t\right| (x-z)} Y(z,\mu )dz +O(1)  \label{(4.7)}
\end{eqnarray}
Denoting $ Y(\mu )= \max_{x\in ( c,b]} |Y(x,\mu )| \ \textrm{ and }
\ \widetilde{q}=\int\limits_{c+}^{b}|q(z)|dz $ from the last
equation we have
\begin{eqnarray*}
Y(\mu )\leq |\frac{\Delta_{23}}{\Delta_{12}}\alpha _{11}|+ |\frac{\Delta_{24}}{\Delta_{12}}\alpha _{11}|%
+ |\frac{\Delta_{13}}{\Delta_{12}}\alpha _{11}|+
|\frac{\Delta_{14}}{\Delta_{12}}\alpha
_{11}|+\frac{\widetilde{q}}{a_{2}|s|}Y(\mu )+M
\end{eqnarray*}
for some $M>0$. Consequently $Y(\mu )=O(1)  \ \textrm{as} \left| \mu
\right| \rightarrow \infty ,$ so
\begin{eqnarray} \label{as} \varphi^{+}
_{\mu}(x )=O(e^{\left| t\right| (x-a)}). \end{eqnarray}
Consequently, the estimate (\ref{(4.4)}) for the case $k=0$ are
obtained by substituting $(\ref{as})$ in the integrals on the
right-hand side of $(\ref{(4.22)})$ . The case $k=1$ of the
$(\ref{(4.4)})$ follows at once on differentiating $(\ref{(4.a)})$
and making the same procedure as in the case $k=0$. The proof of
$(\ref{(4.5)})$ in $(\ref{(4.6)})$ is similar.
\end{proof}
Similarly  we can easily obtain the following Theorem for
$\psi^{\pm}_{\mu}(x ).$
\begin{thm} \label{(c1)}
Let $Im \mu=t.$ Then \ if $\alpha' _{21}\neq 0$
\begin{eqnarray}
\frac{d^{k}}{dx^{k}}\psi ^{+}_{\mu}(x ) &=&\alpha' _{21}\mu ^{2}\frac{d^{k}}{dx^{k}}%
\cos \left[ \frac{\mu\left( b-x\right)}{\sqrt{p^{+}}} \right]
+O\left( \left| \mu\right| ^{k+1}e^{\frac{\left| t\right|
(b-x)}{\sqrt{p^{+}}}}\right)
\label{(c2)} \\
\frac{d^{k}}{dx^{k}}\psi^{-} _{\mu}(x ) &=&-\frac{\Delta_{24}}{%
\Delta_{34}}\frac{\alpha' _{21}}{\sqrt{p^{+}}}\mu ^{3}\sin  \left[
\frac{\mu\left( b-c\right)}{\sqrt{p^{+}}} \right]
\frac{d^{k}}{dx^{k}}\cos \left[ \frac{\mu\left(
x-c\right)}{\sqrt{p^{-}}} \right] \nonumber
\\
&&+O\left(|\mu| ^{k+2} e^{\left|
t\right|(\frac{(b-c)}{\sqrt{p^{+}}}+\frac{(c-x)}{\sqrt{p^{-}}})}\right)
\label{(4.n1)}
\end{eqnarray}
as $\left| \mu \right| \rightarrow \infty $, while if $\alpha'_{21}=0$%

\begin{eqnarray}
\frac{d^{k}}{dx^{k}}\psi^{+} _{\mu}(x ) &=&-a'_{20}\sqrt{p^{+}}\mu%
\frac{d^{k}}{dx^{k}}\sin \left[ \frac{\mu(b-x)}{\sqrt{p^{+}}}\right]
+O\left( \left| \mu\right| ^{k}e^{\left| t\right|
\frac{(b-x)}{\sqrt{p^{+}}}}\right) \label{(c3)}
\\
\frac{d^{k}}{dx^{k}}\psi^{-} _{\mu}(x) &=&-\frac{\Delta_{24}}{%
\Delta_{34}}\alpha' _{20}\mu^{2}\cos  \left[ \frac{\mu\left( b-c\right)}{\sqrt{p^{+}}} \right] \frac{d^{k}}{dx^{k}} \cos \left[ \frac{\mu(x-c)}{\sqrt{p^{-}}}\right]  \nonumber \\
&&+O\left( \left| \mu\right| ^{k+1}e^{\left|
t\right|(\frac{(b-c)}{\sqrt{p^{+}}}+\frac{(c-x)}{\sqrt{p^{-}}})}\right)
\label{(c4)}
\end{eqnarray}
as $\left| \mu \right| \rightarrow \infty $ ($k=0,1)$. Each of this
asymptotic equalities hold uniformly for $x.$
\end{thm}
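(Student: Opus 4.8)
The plan is to follow the scheme already used for Theorem \ref{(4.n)}, exploiting the symmetry between the pair $(\varphi^{-},\varphi^{+})$ built from the endpoint $a$ and the pair $(\psi^{+},\psi^{-})$ built from the endpoint $b$. First I would dispose of the estimates $(\ref{(c2)})$ and $(\ref{(c3)})$ for $\psi^{+}_{\mu}(x)$. Since $\psi^{+}_{\mu}(x)$ solves the integro-differential equation with initial data $y(b)=\alpha_{21}+\mu^{2}\alpha'_{21}$, $y'(b)=\alpha_{20}+\mu^{2}\alpha'_{20}$ prescribed at $b$, it stands to the endpoint $b$ exactly as $\varphi^{-}_{\mu}(x)$ stands to $a$ (with $x-a$ replaced by $b-x$ and the sign of the $\mu^{2}$-term in the initial data reversed). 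Hence Titchmarsh's Lemma (\cite{titc}, Lemma 1.7) applies verbatim and delivers $(\ref{(c2)})$ for $\alpha'_{21}\neq0$ and $(\ref{(c3)})$ for $\alpha'_{21}=0$, exactly as $(\ref{(4.3)})$ and $(\ref{(4.5)})$ were obtained for $\varphi^{-}_{\mu}(x)$; no separate argument is needed here.

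The substantive work is the pair $(\ref{(4.n1)})$ and $(\ref{(c4)})$ for $\psi^{-}_{\mu}(x)$, which, as with $\varphi^{+}_{\mu}(x)$, must be treated individually through the representation $(\ref{(4.22)})$. The first step is to insert the estimates for $\psi^{+}_{\mu}(x)$ just obtained, evaluated at $x=c$: from $(\ref{(c2)})$ with $k=0,1$ one gets $\psi^{+}(c,\mu)=\alpha'_{21}\mu^{2}\cos[\mu(b-c)/\sqrt{p^{+}}]+O(|\mu|e^{|t|(b-c)/\sqrt{p^{+}}})$ and $\partial_{x}\psi^{+}(c,\mu)=\alpha'_{21}(\mu^{3}/\sqrt{p^{+}})\sin[\mu(b-c)/\sqrt{p^{+}}]+O(|\mu|^{2}e^{|t|(b-c)/\sqrt{p^{+}}})$. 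Substituting these into the two bracketed coefficients of $(\ref{(4.22)})$ shows that the coefficient of $\cos[\mu(x-c)/\sqrt{p^{-}}]$ is dominated by the $\Delta_{24}\,\partial_{x}\psi^{+}(c,\mu)$ contribution, which reproduces precisely the announced leading factor $-(\Delta_{24}/\Delta_{34})(\alpha'_{21}/\sqrt{p^{+}})\mu^{3}\sin[\mu(b-c)/\sqrt{p^{+}}]$, while the coefficient of $\sin[\mu(x-c)/\sqrt{p^{-}}]$ carries an extra $\mu^{-1}$ and is therefore one order lower.

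The remaining and genuinely technical point is to show that the integral term in $(\ref{(4.22)})$ is subdominant, and for this an a priori bound on $\psi^{-}_{\mu}(x)$ of the correct exponential order is required. I would set $\psi^{-}_{\mu}(x)=e^{|t|(\frac{b-c}{\sqrt{p^{+}}}+\frac{c-x}{\sqrt{p^{-}}})}Z(x,\mu)$, recast $(\ref{(4.22)})$ as an integral equation for $Z$, and bound $Z(\mu):=\max_{x\in[a,c)}|Z(x,\mu)|$. For $z\in[x,c)$ the trigonometric kernel contributes a factor $e^{|t|(z-x)/\sqrt{p^{-}}}$ which, combined with the weight $e^{|t|(c-z)/\sqrt{p^{-}}}$ carried by $\psi^{-}_{\mu}(z)$, cancels the overall exponential weight exactly; thus the integral term is bounded by $(\tilde q/(\sqrt{p^{-}}|\mu|))Z(\mu)$ with $\tilde q=\int_{a}^{c-}|q(z)|\,dz$, whence $Z(\mu)=O(|\mu|^{3})$ for $|\mu|$ large and $\psi^{-}_{\mu}(x)=O(|\mu|^{3}e^{|t|(\frac{b-c}{\sqrt{p^{+}}}+\frac{c-x}{\sqrt{p^{-}}})})$. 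Feeding this crude bound back into the $\mu^{-1}$-weighted integral renders that term $O(|\mu|^{2}e^{|t|(\cdots)})$, hence part of the $O(|\mu|^{k+2})$ remainder, which establishes $(\ref{(4.n1)})$ for $k=0$; the case $k=1$ follows by differentiating $(\ref{(4.22)})$ and repeating the estimate, each derivative raising every power of $\mu$ by one. The branch $\alpha'_{21}=0$ yielding $(\ref{(c4)})$ is identical once the leading $\mu^{3}$-term is replaced by the $\mu^{2}$-term coming from $\alpha'_{20}$. I expect the main obstacle to be precisely the bookkeeping of the exponential weights in this Gronwall step: the weight must be chosen so that the kernel factor cancels it and the residual coefficient of $Z(\mu)$ is $O(|\mu|^{-1})<1$, since a looser choice either fails to close the fixed-point estimate or loses the sharp power of $\mu$ in the remainder.
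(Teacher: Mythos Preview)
Your proposal is correct and follows essentially the same route as the paper: the paper gives no independent proof of this theorem, stating only ``Similarly we can easily obtain the following Theorem for $\psi^{\pm}_{\mu}(x)$,'' so the intended argument is precisely the mirror of the proof of Theorem~\ref{(4.n)} that you have written out --- Titchmarsh's Lemma for $\psi^{+}_{\mu}$, then the substitution $\psi^{-}_{\mu}(x)=e^{|t|(\cdots)}Z(x,\mu)$ and a Gronwall bound to control the integral term in $(\ref{(4.22)})$. Your bookkeeping of the exponential weight $e^{|t|(\frac{b-c}{\sqrt{p^{+}}}+\frac{c-x}{\sqrt{p^{-}}})}$ is in fact more explicit than what the paper records for the analogous step on $\varphi^{+}_{\mu}$.
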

\section{Asymptotic behaviour
 of eigenvalues and corresponding eigenfunctions%
} It is well-known from ordinary differential equation theory that
the Wronskians $W[\varphi^{-}_{\mu}(x ),\psi ^{-}_{\mu}(x )]$ and
$W[\varphi ^{+}_{\mu}(x ),\psi ^{+}_{\mu}(x )]$ are independent of
variable $x.$ By using (\ref{8})-(\ref{9}) and (\ref{11})-(\ref{12})
we have
\begin{eqnarray*}
w^{+}(\mu ) &=&\varphi^{+}(c,\mu )\frac{\partial\psi ^{+}(c,\mu
)}{\partial x}-\frac{\partial\varphi
^{+}(c,\mu )}{\partial x}\psi ^{+}(c,\mu ) \\
 &=&\frac{\Delta_{34}}{\Delta_{12}}(\varphi^{-}(c,\mu )\frac{\partial\psi
^{-}(c,\mu )}{\partial x}-\frac{\partial\varphi
^{-}(c,\mu )}{\partial x}\psi ^{-}(c,\mu )) \\
&=&\frac{\Delta_{34}}{\Delta_{12}} w^{-}(\mu ).
\end{eqnarray*}
Denote $w(\mu):= \Delta_{34} w^{-}(\mu) = \Delta_{12} \ w^{+}(\mu).$
\begin{thm}
The eigenvalues of the problem $(\ref{1})$-$(\ref{4})$ are consist
of the zeros of the function $w(\mu ).$\label{t2}
\end{thm}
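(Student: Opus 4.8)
The plan is to prove the two inclusions of the equality ``set of eigenvalues'' $=$ ``zero set of $w$'' separately, the bridge being the classical fact that two solutions of the same second-order equation have vanishing Wronskian exactly when they are linearly dependent. The role of $\varphi(\cdot,\mu)$ and $\psi(\cdot,\mu)$ is that, up to a scalar, $\varphi$ is the only solution of (\ref{1}) fulfilling the left boundary condition (\ref{2}) together with both transmission conditions (\ref{4}), while $\psi$ is the only one fulfilling the right boundary condition (\ref{3}) together with (\ref{4}); thus $\mu_0$ is an eigenvalue precisely when these two essentially unique candidates coincide up to a scalar, i.e. when they are linearly dependent.

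First I would treat the direction ``eigenvalue $\Rightarrow$ zero of $w$''. Let $\mu_0$ be an eigenvalue with eigenfunction $y\neq0$. On $[a,c)$ the piece $y^-$ solves (\ref{1}) and satisfies (\ref{2}); since $\theta_1>0$ the coefficient vector $(\alpha_{10}-\mu_0^2\alpha'_{10},\alpha_{11}-\mu_0^2\alpha'_{11})$ of (\ref{2}) cannot vanish, so (\ref{2}) is a nontrivial linear functional on the two-dimensional solution space and its kernel is the line spanned by $\varphi^-$. Hence $y^-=k\,\varphi^-$ with $k\neq0$. Because $y$ obeys (\ref{4}) and both $2\times2$ blocks of $T$ are nonsingular ($\Delta_{12}>0$, $\Delta_{34}>0$), these conditions define a linear bijection between the Cauchy data of $y$ at $c-$ and at $c+$ (the branch (\ref{8})--(\ref{9})); linearity together with Cauchy uniqueness for (\ref{1}) on $(c,b]$ then forces $y^+=k\,\varphi^+$, so $y=k\,\varphi$ throughout. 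Running the same argument from the right end, using $\theta_2>0$ and the branch (\ref{11})--(\ref{12}), gives $y=k'\,\psi$. Therefore $\varphi$ and $\psi$ are proportional at $\mu_0$, their Wronskians $w^\pm(\mu_0)$ vanish, and $w(\mu_0)=\Delta_{34}\,w^-(\mu_0)=0$.

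Conversely, for ``zero of $w$ $\Rightarrow$ eigenvalue'', suppose $w(\mu_0)=0$. Since $\Delta_{12}>0$ and $\Delta_{34}>0$, the identity $w(\mu)=\Delta_{34}w^-(\mu)=\Delta_{12}w^+(\mu)$ established just above shows $w^-(\mu_0)=w^+(\mu_0)=0$, whence $\varphi^-,\psi^-$ are proportional on $[a,c)$ and $\varphi^+,\psi^+$ are proportional on $(c,b]$. The data (\ref{7}) and (\ref{10}) are nonzero by $\theta_1,\theta_2>0$, and the transmission bijection carries nonzero data to nonzero data, so all four pieces are nontrivial; writing $\psi^-=c_0\varphi^-$ we have $c_0\neq0$. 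As $\psi$ and $\varphi$ both satisfy (\ref{4}), applying the same linear transmission map to (left data of $\psi$)$\,=c_0\cdot$(left data of $\varphi$) yields the same relation for the data at $c+$, and Cauchy uniqueness gives $\psi^+=c_0\varphi^+$; hence $\psi=c_0\varphi$ on all of $[a,c)\cup(c,b]$. Because $\psi$ satisfies (\ref{3}), so does $\varphi=c_0^{-1}\psi$, while $\varphi$ satisfies (\ref{2}) and (\ref{4}) by construction. Thus $\varphi(\cdot,\mu_0)$ is a nontrivial solution of the whole problem (\ref{1})--(\ref{4}), i.e. $\mu_0$ is an eigenvalue.

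I expect the only genuinely delicate point to be the globalization of proportionality across the interior point $c$: linear dependence of $\varphi$ and $\psi$ must be promoted from each subinterval to the whole configuration, and this is exactly where the standing hypotheses $\Delta_{12}>0$ and $\Delta_{34}>0$ are indispensable. They guarantee that (\ref{4}) acts as a linear isomorphism between the one-sided Cauchy data and, equivalently, that the two one-sided Wronskians vanish simultaneously through $\Delta_{12}w^+(\mu)=\Delta_{34}w^-(\mu)$; without this, the two pieces could be proportional on each side with mismatched constants and fail to assemble into a single eigenfunction.
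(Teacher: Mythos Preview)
Your proof is correct. The main difference from the paper lies in the direction ``eigenvalue $\Rightarrow$ zero of $w$''. The paper argues by contradiction: assuming $w(\mu_0)\neq 0$, the pairs $\varphi^{\pm},\psi^{\pm}$ form bases of the solution spaces on each subinterval, so the eigenfunction has a representation $y_0=k_{11}\varphi^{-}+k_{12}\psi^{-}$ on $[a,c)$ and $y_0=k_{21}\varphi^{+}+k_{22}\psi^{+}$ on $(c,b]$; imposing the four conditions $V_i(y_0)=0$ gives a homogeneous linear system in $k_{11},k_{12},k_{21},k_{22}$ whose determinant is computed to be $\frac{1}{\Delta_{12}\Delta_{34}}w^{3}(\mu_0)\neq 0$, forcing $y_0=0$. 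Your route is more direct and avoids this $4\times 4$ determinant: you show that any eigenfunction must be a scalar multiple of $\varphi$ (reading from the left) and also of $\psi$ (reading from the right), whence $\varphi$ and $\psi$ are proportional and $w(\mu_0)=0$. This is arguably cleaner, and it makes explicit exactly where $\theta_1,\theta_2>0$ and $\Delta_{12},\Delta_{34}>0$ enter. For the converse direction the paper is slightly more economical than you: once $\psi^{-}=k\varphi^{-}$ on $[a,c)$, the paper simply observes that $\psi$ then satisfies (\ref{2}) at $x=a$, and since $\psi$ already satisfies (\ref{3}) and (\ref{4}) by construction, $\psi$ itself is the eigenfunction, without needing to globalize the proportionality across $c$. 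One small point worth making explicit in your argument: when you write $y^{-}=k\varphi^{-}$ with $k\neq 0$, you are tacitly using that $y^{-}\not\equiv 0$; this follows because otherwise the transmission bijection would give zero Cauchy data at $c+$ and hence $y^{+}\equiv 0$, contradicting $y\neq 0$.
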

\begin{proof}
Let $w(\mu _{0})=0.$ Then $W[\varphi^{-} _{\mu _{0}} ,\psi^{-} _{\mu
_{0}}]_{x}=0$. Thus, the functions $\varphi _{1\mu _{0}}(x)$ and
$\psi^{-} _{\mu _{0}}(x)$ are linearly depended, i.e.,
\begin{equation}
\psi _{1\mu _{0}}(x)=k\varphi _{\mu _{0}}(x),x\in \lbrack -1,0]
\label{(3.20)}
\end{equation}
for some $k\neq 0.$ In view of this equality $\psi_{\mu _{0}} (x)$
satisfies also the first boundary condition $(\ref{2}).$ Recall that
the solution $\psi_{\mu _{0}} (x)$  also satisfies the second
boundary condition $(\ref{3})$ and both transmission conditions
$(\ref{4})$ Consequently $\psi_{\mu _{0}} (x)$ is an eigenfunction
of the problem$(\ref{1})$- $(\ref{4})$ corresponding to the
eigenvalue $\mu _{0}.$ Hence each zero of $w(\mu )$ is an
eigenvalue.

Now let $y_{0}(x)$ be any eigenfunction corresponding to eigenvalue
$\mu _{0}$. Suppose, that $w(\mu _{0})\neq 0$. Then the couples of
the functions $\varphi^{-}, \psi ^{-}$ and $\varphi ^{+}, \psi ^{+}$
would be
linearly independent on $[a,c)$ and $(c,b]$ respectively. Therefore, the solution $%
y_{0}(x)$ may be represented in the form
\begin{equation}
\ \ y_{0}(x)=\left\{
\begin{array}{c}
k_{11}\varphi^{-} _{\mu _{0}}(x)+k_{12}\psi^{-} _{\mu _{0}}(x)\textrm{ for }%
x\in \lbrack a,c) \\
k_{21}\varphi^{+} _{\mu _{0}}(x)+k_{22}\psi^{+} _{\mu _{0}}(x)\textrm{ for }%
x\in c,b]%
\end{array}
\right.  \label{(3.21)}
\end{equation}
where at least one of the coefficients $k_{11}, k_{12}, k_{21}$ and
$k_{22}$ is not zero. Considering the equations
\begin{equation}
\tau_{i}(y_{0}(x))=0, \ \ i=1,2, 3, 4    \label{(3.255)}
\end{equation}
 as the homogenous system of linear equations of the variables $k_{11}, k_{12},
 k_{21}, k_{22}$ and taking into account the conditions
 $(\ref{2})-(\ref{4})$
we obtain homogenous linear simultaneous equation of the variables
$k_{ij}(i,j=1,2)$ the determinant of which is equal to
$\frac{1}{\Delta_{12} \Delta_{34}}\omega^{3}(\mu)$ and therefore
does not vanish by assumption. Consequently this linear simultaneous
equation has the only trivial solution $k_{ij}=0(i,j=1,2).$  We thus
arrive at a  contradiction, which completes the proof.
\end{proof}

Now by modifying the standard method we  prove that all eigenvalues
of the problem $(\ref{1})-(\ref{4})$ are real.
\begin{thm}
The eigenvalues of the boundary value transmission problem
$(\ref{1})-(\ref{4})$ are real.
\end{thm}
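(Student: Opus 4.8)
The plan is to prove that every eigenvalue $\mu^{2}$ is real by a weighted Lagrange (Green) identity, with the weight chosen so that the two interface terms at the interior point $c$ cancel and the two eigenparameter-dependent endpoint contributions combine into a single nonnegative quantity. Let $y$ be an eigenfunction for the eigenvalue $\mu^{2}$; since $p$ and $q$ are real, $\overline{y}$ satisfies $\mathcal{L}(\overline{y})=\overline{\mu^{2}}\,\overline{y}$. For any two functions the Lagrange identity reads $\overline{z}\,\mathcal{L}(y)-y\,\overline{\mathcal{L}(z)}=-p\frac{d}{dx}\left(y'\overline{z}-y\overline{z}'\right)$, so integrating separately over $[a,c)$ and $(c,b]$ produces the endpoint contributions $-p^{-}[W]_{a}^{c-}$ and $-p^{+}[W]_{c+}^{b}$, where I write $W(x)=y'(x)\overline{y}(x)-y(x)\overline{y}'(x)$.

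First I would introduce the sesquilinear form $\langle f,g\rangle=\frac{\Delta_{12}}{p^{-}}\int_{a}^{c}f\overline{g}\,dx+\frac{\Delta_{34}}{p^{+}}\int_{c}^{b}f\overline{g}\,dx$, which is a genuine inner product because $\Delta_{12},\Delta_{34},p^{\pm}>0$ by hypothesis. Using $\mathcal{L}(y)=\mu^{2}y$ gives $\langle \mathcal{L}y,y\rangle-\langle y,\mathcal{L}y\rangle=(\mu^{2}-\overline{\mu^{2}})\|y\|^{2}$, with $\|y\|^{2}=\langle y,y\rangle>0$ for the nontrivial eigenfunction $y$. On the other hand, the weights $\Delta_{12}/p^{-}$ and $\Delta_{34}/p^{+}$ cancel the factors $-p^{\pm}$ coming from the Lagrange identity, so that the same difference equals $\Delta_{12}W(a)-\Delta_{34}W(b)-\Delta_{12}W(c-)+\Delta_{34}W(c+)$.

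Next I would evaluate each piece. At the endpoint $a$, setting $R_{a}(y)=\alpha_{10}y(a)-\alpha_{11}y'(a)$ and $R'_{a}(y)=\alpha'_{10}y(a)-\alpha'_{11}y'(a)$, condition $(\ref{2})$ reads $R_{a}(y)=\mu^{2}R'_{a}(y)$; forming $R_{a}(y)\overline{R'_{a}(y)}-R'_{a}(y)\overline{R_{a}(y)}$ and recognizing $\theta_{1}=\alpha_{11}\alpha'_{10}-\alpha_{10}\alpha'_{11}$ yields $W(a)=-\frac{\mu^{2}-\overline{\mu^{2}}}{\theta_{1}}\,|R'_{a}(y)|^{2}$. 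The analogous computation at $b$, where $(\ref{3})$ carries the opposite sign $R_{b}(y)=-\mu^{2}R'_{b}(y)$, gives $W(b)=\frac{\mu^{2}-\overline{\mu^{2}}}{\theta_{2}}\,|R'_{b}(y)|^{2}$. For the interface terms I would solve the two transmission relations $(\ref{4})$ for $(y(c+),y'(c+))$ in terms of $(y(c-),y'(c-))$ via the matrix $-(T^{+})^{-1}T^{-}$, whose determinant is $\Delta_{12}/\Delta_{34}$; this forces $W(c+)=\frac{\Delta_{12}}{\Delta_{34}}W(c-)$, so that $-\Delta_{12}W(c-)+\Delta_{34}W(c+)=0$.

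Assembling these identities yields
\[
(\mu^{2}-\overline{\mu^{2}})\left(\|y\|^{2}+\frac{\Delta_{12}}{\theta_{1}}|R'_{a}(y)|^{2}+\frac{\Delta_{34}}{\theta_{2}}|R'_{b}(y)|^{2}\right)=0 ,
\]
and since the parenthesis is strictly positive for a nontrivial eigenfunction (all of $\theta_{1},\theta_{2},\Delta_{12},\Delta_{34}$ are positive by assumption), we conclude $\mu^{2}=\overline{\mu^{2}}$, i.e. the eigenvalue is real. The main obstacle I anticipate is the interior point: one must check that the transmission conditions force exactly the relation $W(c+)=(\Delta_{12}/\Delta_{34})W(c-)$ and that this is compensated precisely by the choice of weights $\Delta_{12}/p^{-},\ \Delta_{34}/p^{+}$, so that no spurious interface term survives. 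Keeping the sign conventions in $(\ref{2})$ and $(\ref{3})$ straight, so that both endpoint contributions enter the parenthesis with the same (positive) sign rather than cancelling, is the other point demanding care.
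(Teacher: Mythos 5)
Your proof is correct, and it is worth noting that the paper itself gives no argument here: the proof environment following this theorem is empty, the authors saying only that the result follows ``by modifying the standard method.'' Your weighted Lagrange-identity computation is precisely that standard method (in the style of Fulton and of Mukhtarov et al.), carried out in full. The three ingredients all check out: (a) the transfer-matrix identity $W(c+)=\bigl(\Delta_{12}/\Delta_{34}\bigr)W(c-)$ follows from $v^{+}=-(T^{+})^{-1}T^{-}v^{-}$ together with the fact that $A^{T}JA=\det(A)\,J$ for any real $2\times 2$ matrix $A$ and the symplectic form $J$ representing $W$, so the interface terms cancel exactly against the weights $\Delta_{12}/p^{-}$ and $\Delta_{34}/p^{+}$; (b) the endpoint identities $-\theta_{1}W(a)=(\mu^{2}-\overline{\mu^{2}})|R'_{a}(y)|^{2}$ and $-\theta_{2}W(b)=-(\mu^{2}-\overline{\mu^{2}})|R'_{b}(y)|^{2}$ are correct, including the sign flip coming from the $+\mu^{2}$ in condition (\ref{3}) versus the $-\mu^{2}$ in (\ref{2}); and (c) the standing hypotheses $\theta_{1},\theta_{2},\Delta_{12},\Delta_{34}>0$ are exactly what makes the final bracket strictly positive. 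In effect you have reconstructed the inner product of the linearizing Hilbert space $L_{2}\oplus\mathbb{C}^{2}$ without introducing it explicitly.

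One small caveat you may wish to make explicit: what your argument shows is that the spectral parameter $\lambda=\mu^{2}$ is real. Since the paper indexes eigenvalues by $\mu$ itself (and uses real $\mu$ in the asymptotic formulas), concluding that $\mu$ is real additionally requires excluding $\mu^{2}<0$; that is a separate semiboundedness statement which neither your argument nor the paper addresses, and which may in fact fail for finitely many eigenvalues without further sign restrictions on $q$ and the boundary coefficients.
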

\begin{proof}
\end{proof}

Since the Wronskians of $\varphi^{+} _{\mu}(x )$ and $\psi^{+}
_{\mu}(x )$ are independent of $x$, in particular, by putting $x=a$
we have
\begin{eqnarray}\label{(ko)}
\omega(\mu ) &=& \Delta_{34}\{\varphi^{-}(a,\mu
)\frac{\partial\psi^{-}(a,\mu )}{\partial x}-\frac{\partial\varphi
^{-}(a,\mu )}{\partial x}\psi^{-}(a,\mu )\}\nonumber \\
&=& \Delta_{34}\{(\alpha _{11}-\mu^{2}\alpha'
_{11})\frac{\partial\psi^{-}(a,\mu )}{\partial x}+(\alpha
_{10}-\mu^{2}\alpha' _{10})\psi^{-}(a,\mu )\}.
\end{eqnarray}
 Let  $Im \mu=t.$ By substituting $(\ref{(c2)})$ and
$(\ref{(c4)})$ in $(\ref{(ko)})$ we obtain easily the following
asymptotic representations\\ \textbf{(i)} If $\alpha _{21} ^{\prime
}\neq 0$ and $\alpha _{11}^{\prime }\neq 0$, then
\begin{equation}\label{(4.15)}
w(\mu )=-\frac{\Delta_{24}\alpha _{11}^{\prime }\alpha _{21}^{\prime }}{\sqrt{p^{-}}\sqrt{p^{+}}}%
\mu^{6} \sin\left[ \frac{\mu\left( c-a\right)}{\sqrt{p^{-}}}
\right]\sin \left[ \frac{\mu\left( b-c\right)}{\sqrt{p^{+}}} \right]
+O\left( \left| \mu\right| ^{5}e^{\left| t\right|
(\frac{(b-c)}{\sqrt{p^{+}}}+\frac{(c-a)}{\sqrt{p^{-}}} )}\right)
\end{equation}
\textbf{(ii)} If $\alpha _{21} ^{\prime }\neq  0$ and $\alpha
_{11}^{\prime }= 0$, then
\begin{equation}
w(\mu )=-\frac{\Delta_{24}\alpha _{10}^{\prime }\alpha _{21}^{\prime }}{\sqrt{p^{+}}}%
\mu^{5}\cos\left[ \frac{\mu\left( c-a\right)}{\sqrt{p^{-}}}
\right]\sin \left[\frac{ \mu\left( b-c\right)}{\sqrt{p^{+}}} \right]
+O\left( \left| \mu\right| ^{4}e^{\left| t\right|
(\frac{(b-c)}{\sqrt{p^{+}}}+\frac{(c-a)}{\sqrt{p^{-}}} )}\right)
\label{(4.16)}
\end{equation}
\textbf{(iii)} If $\alpha _{21} ^{\prime }= 0$ and $\alpha
_{11}^{\prime }\neq 0$, then
\begin{equation}
w(\mu )=-\frac{\Delta_{24}\alpha _{11}^{\prime }\alpha _{20}^{\prime }}{\sqrt{p^{-}}}%
\mu^{5}\sin \left[ \frac{\mu\left( c-a\right)}{\sqrt{p^{-}}}
\right]\cos \left[ \frac{\mu\left( b-c\right)}{\sqrt{p^{+}}} \right]
+O\left( \left| \mu\right| ^{4}e^{\left| t\right|
(\frac{(b-c)}{\sqrt{p^{+}}}+\frac{(c-a)}{\sqrt{p^{-}}} )}\right)
\label{(4.17)}
\end{equation}
\textbf{(iv)} If $\alpha _{21} ^{\prime }=0$ and $\alpha
_{11}^{\prime }= 0$, then
\begin{equation}
w(\mu )=\Delta_{24}\alpha _{10}^{\prime }\alpha _{20}^{\prime }%
\mu^{4}\cos\left[ \frac{\mu\left( c-a\right)}{\sqrt{p^{-}}} \right]
\cos \left[ \frac{\mu\left( b-c\right)}{\sqrt{p^{+}}} \right]
+O\left( \left| \mu^{3}\right| e^{\left| t\right|
(\frac{(b-c)}{\sqrt{p^{+}}}+\frac{(c-a)}{\sqrt{p^{-}}} )}\right)
\label{(4.18)}
\end{equation}
Now we are ready to derived the needed asymptotic formulas for
eigenvalues and  eigenfunctions.
\begin{thm}
The boundary-value-transmission problem $(\ref{1})$-$(\ref{4})$ has
an precisely numerable many real eigenvalues, whose behavior may be
expressed by two sequence $\left\{ \mu _{n,1}\right\} $ and $\left\{
\mu _{n,2}\right\} $ with following asymptotic as $n\rightarrow \infty $ \\
\textbf{(i)} If $\alpha _{21} ^{\prime }\neq 0$ and $\alpha
_{11}^{\prime }\neq 0$, then
\begin{equation} \label{(5.1)}
\mu_{n,1}=\frac{\sqrt{p^{-}}(n-3)\pi}{(c-a)} +O\left(
\frac{1}{n}\right),  \ \mu_{n,2}=\frac{\sqrt{p^{+}}n\pi}{(b-c)}
+O\left(\frac{1}{n}\right),
\end{equation}
\textbf{(ii)} If $\alpha _{21} ^{\prime }\neq  0$ and $\alpha
_{11}^{\prime }= 0$, then
\begin{equation}\label{(5.2)}
\mu_{n,1}=\frac{\sqrt{p^{-}}(2n+1)\pi}{2(c-a)} +O\left(
\frac{1}{n}\right), \ \mu_{n,2}=\frac{\sqrt{p^{+}}(n-2)\pi}{(b-c)}
+O\left(
\frac{1}{n}\right),%
\end{equation}
\textbf{(iii)} If $\alpha _{21} ^{\prime }= 0$ and $\alpha
_{11}^{\prime }\neq 0$, then
\begin{equation}\label{(5.3)}
\mu_{n,1}=\frac{\sqrt{p^{-}}(n-2)\pi}{(c-a)} +O\left(
\frac{1}{n}\right) , \
\mu_{n,2}=\frac{\sqrt{p^{+}}(2n+1)\pi}{2(b-c)} +O\left(
\frac{1}{n}\right),
\end{equation}
\textbf{(iv)} If $\alpha _{21} ^{\prime }= 0$ and $\alpha
_{11}^{\prime }= 0$, then
\begin{equation} \label{(5.4)}
\mu_{n,1}=\frac{\sqrt{p^{-}}(2n-3)\pi}{2(c-a)}
+O\left(\frac{1}{n}\right), \  \mu_{n,2}=\frac{\sqrt{p^{+}}(2n+1)\pi
}{2(b-c)}+O\left( \frac{1}{n}\right)
\end{equation}
\end{thm}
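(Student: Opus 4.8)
The plan is to locate the zeros of the characteristic function $w(\mu)$ by reading each of the asymptotic representations (\ref{(4.15)})--(\ref{(4.18)}) as a dominant trigonometric term plus a remainder of strictly lower order in $|\mu|$, and then applying Rouch\'e's theorem on suitably chosen contours. By Theorem \ref{t2} the eigenvalues are exactly the zeros of $w(\mu)$, and by the preceding theorem they are real, so it suffices to count and locate these zeros.

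First I would treat case (i). Write $w(\mu)=w_{0}(\mu)+R(\mu)$, where
\[
w_{0}(\mu)=-\frac{\Delta_{24}\alpha'_{11}\alpha'_{21}}{\sqrt{p^{-}}\sqrt{p^{+}}}\mu^{6}\sin\left[\frac{\mu(c-a)}{\sqrt{p^{-}}}\right]\sin\left[\frac{\mu(b-c)}{\sqrt{p^{+}}}\right],
\]
and $R(\mu)=O(|\mu|^{5}e^{|t|((b-c)/\sqrt{p^{+}}+(c-a)/\sqrt{p^{-}})})$. The zeros of $w_{0}$ split into two interlacing families: the zeros of the first factor, $\mu^{(1)}_{n}=\sqrt{p^{-}}\,n\pi/(c-a)$, and those of the second, $\mu^{(2)}_{n}=\sqrt{p^{+}}\,n\pi/(b-c)$. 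Around each such point I would draw a circle of radius $\delta_{n}=O(1/n)$ and show that on its boundary $|R(\mu)|<|w_{0}(\mu)|$. Near a simple zero of $\sin[\mu(c-a)/\sqrt{p^{-}}]$ that factor grows linearly, so on $|\mu-\mu^{(1)}_{n}|=\delta_{n}$ one has $|w_{0}(\mu)|\gtrsim |\mu|^{6}\delta_{n}e^{|t|(\ldots)}$ while $|R(\mu)|=O(|\mu|^{5}e^{|t|(\ldots)})$; choosing $\delta_{n}$ a fixed multiple of $1/|\mu|\sim 1/n$ makes $|w_{0}|$ strictly dominate. Rouch\'e's theorem then gives exactly one zero of $w$ inside each circle, hence $\mu_{n,1}=\mu^{(1)}_{n}+O(1/n)$ and $\mu_{n,2}=\mu^{(2)}_{n}+O(1/n)$.

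The remaining cases (ii)--(iv) are handled identically, replacing the appropriate sine factor in $w_{0}$ by a cosine and adjusting the power of $\mu$ according to (\ref{(4.16)})--(\ref{(4.18)}); the zeros of a cosine $\cos[\mu(c-a)/\sqrt{p^{-}}]$ occur at $\mu=\sqrt{p^{-}}(2n+1)\pi/(2(c-a))$, which accounts for the half-integer shifts appearing in (\ref{(5.2)})--(\ref{(5.4)}). To secure the assertion of \emph{precisely numerable many} eigenvalues and to pin down the integer offsets in the index (for example $n\mapsto n-3$ in (\ref{(5.1)})), I would run a second Rouch\'e argument on large circles $|\mu|=R_{n}$ chosen to avoid the approximate zeros: there $w$ and $w_{0}$ have equally many zeros inside, so no eigenvalue is missed and the two interlacing families merge into a single increasing enumeration whose relabeling produces exactly the stated shifts without affecting the leading term.

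The main obstacle will be making the Rouch\'e estimate rigorous and \emph{uniform} in $n$: one must verify that the implied constant in $R(\mu)$ does not degrade as $|\mu|\to\infty$ along the chosen contours, in particular controlling the exponential weight $e^{|t|(\ldots)}$ when $\mu$ acquires a nonzero imaginary part on the circle, and that $\delta_{n}=O(1/n)$ can be taken small enough to isolate a single zero yet large enough to keep $|R|<|w_{0}|$ on the entire boundary. Once this dominance is established, the $O(1/n)$ correction follows at once from the linear vanishing of the trigonometric factor together with the one-order gap between the leading term and the remainder.
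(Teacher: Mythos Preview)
The paper's own proof of this theorem is empty: the \texttt{proof} environment contains nothing. The authors evidently regard the result as an immediate consequence of the asymptotic representations (\ref{(4.15)})--(\ref{(4.18)}) for $w(\mu)$ derived just above, together with the standard Rouch\'e-type counting argument familiar from Titchmarsh~\cite{titc}. Your proposal is precisely that argument spelled out, so in spirit it matches what the paper intends; there is no alternative route to compare against.

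That said, your small-circle Rouch\'e step has a gap you should be aware of. You estimate $|w_{0}(\mu)|\gtrsim |\mu|^{6}\delta_{n}e^{|t|(\ldots)}$ on the circle $|\mu-\mu^{(1)}_{n}|=\delta_{n}$, but this uses that the \emph{other} factor $\sin[\mu(b-c)/\sqrt{p^{+}}]$ is bounded below there. When the two arithmetic progressions $\sqrt{p^{-}}n\pi/(c-a)$ and $\sqrt{p^{+}}m\pi/(b-c)$ come within $O(1/n)$ of each other --- which happens for infinitely many pairs $(n,m)$ whenever the ratio $\sqrt{p^{+}}(c-a)/(\sqrt{p^{-}}(b-c))$ is irrational --- both sine factors are small simultaneously, and your lower bound for $|w_{0}|$ collapses to $|\mu|^{6}\delta_{n}^{2}$, which no longer dominates $|R|=O(|\mu|^{5})$ with $\delta_{n}\sim 1/n$. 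The standard repair is to abandon the individual small circles in those near-collision regions and instead enclose the close pair $\mu^{(1)}_{n},\mu^{(2)}_{m}$ in a single contour (a short rectangle or a circle of radius comparable to their separation plus $C/n$) on which one can still verify $|R|<|w_{0}|$; Rouch\'e then yields two zeros of $w$ inside, and since both are real one distributes them to the two sequences by order. Your large-circle count already guarantees no zeros are missed globally, so once this local refinement is in place the stated $O(1/n)$ asymptotics follow.
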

\begin{proof}
\end{proof}
Using this asymptotic expression of eigenvalues we can easily obtain
the corresponding asymptotic expressions for eigenfunctions of the
problem $(\ref{1})$-$(\ref{4})$.
 Recalling that $\varphi_{\mu _{n,i}}(x)$ is an eigenfunction
 according to the eigenvalue $\mu _{n},$ and by putting (\ref{(5.1)}) in the (\ref{(4.3)})-(\ref{(4.4)}) for $k=0,1$
 and denoting the corresponding  eigenfunction as
\begin{eqnarray*}
\widetilde{\varphi}_{n,i}=\{
\begin{array}{c}
\varphi^{-} _{\mu _{n,i}}(x) \ \textrm{for }x\in \lbrack a,c) \\
\varphi^{+} _{\mu _{n,i}}(x,) \  \textrm{for }x\in (c,b]
\end{array}
\label{(3.16)}
\end{eqnarray*}
 we get the following cases If $\alpha _{21} ^{\prime }\neq 0$ and $\alpha _{11}\neq
0$
\begin{eqnarray*}
\widetilde{\varphi} _{n,1}(x)=\left\{
\begin{array}{ll}
\begin{array}{l}
-\alpha'_{11}p^{-}\left[\frac{(n-3)\pi}{(c-a)}\right]^{2}\cos \left[
 \frac{(n-3)\pi
 (x-a)}{(c-a)}\right]+O\left(n\right),
\end{array}
\begin{array}{l}
\textrm{for }x\in \lbrack a,c)
\end{array}
\\
\frac{\Delta_{24}\alpha' _{11}p^{-}}{\Delta_{12}}\left[
\frac{(n-3)\pi}{(c-a)}\right]^{3} \sin \left[
  (n-3)\pi\right]\cos \left[\frac{\sqrt{p^{-}}
  (n-3)\pi(x-c)}{\sqrt{p^{+}}(c-a)}\right]
\nonumber
\\
+O\left(n^{2} \right), \textrm{ for }x\in (c,b]
\end{array}
\right.
\end{eqnarray*}
and
\begin{eqnarray*}
\widetilde{\varphi} _{n,2}(x)=\left\{
\begin{array}{l}
-\alpha'_{11}p^{+}\left[\frac{n\pi}{(b-c)}\right]^{2}\cos \left[
  \frac{\sqrt{p^{+}}n\pi
 (x-a)}{\sqrt{p^{-}}(b-c)}\right]+O\left(n
\right),\textrm{ \ for }x\in \lbrack a,c)
\\
\frac{\Delta_{24}\alpha'
_{11}}{\Delta_{12}\sqrt{p^{-}}}\left[\frac{\sqrt{p^{+}}n\pi}{(b-c)}\right]^{3}
\sin
\left[\frac{\sqrt{p^{+}}n\pi(c-a)}{\sqrt{p^{-}}(b-c)}\right]\cos
\left[
  \frac{n\pi
 (x-c)}{(b-c)}\right]\nonumber
\\
+O\left( n^{2}\right), \textrm{ \ for }x\in (c,b]
\end{array}
\right.
\end{eqnarray*}
  If $\alpha _{21} ^{\prime }\neq 0$ and $\alpha _{11}=
0$, then
\begin{eqnarray*}
\widetilde{\varphi} _{n,1}(x)=\left\{
\begin{array}{ll}
\begin{array}{l}
-\alpha'_{10}p^{-}\left[\frac{
 (2n+1)\pi}{2(c-a)}\right]\sin \left[\frac{
 (2n+1)\pi(x-a)}{2(c-a)}\right]+O\left(1
\right),
\end{array}
\begin{array}{l}
\textrm{ for }x\in \lbrack a,c)
\end{array}
\\
-\frac{\Delta_{24}\alpha' _{10}p^{-}}{\Delta_{12}}\left[\frac{
 (2n+1)\pi}{2(c-a)} \right]^{2} \cos
\left[\frac{
 (2n+1)\pi}{2}\right]\cos \left[\frac{\sqrt{p^{-}}(2n+1)\pi(x-c)}{2\sqrt{p^{+}}(c-a)}\right]\nonumber \\
 +O\left(n
\right),  \textrm{ for }x\in (c,b]
\end{array}
\right.
\end{eqnarray*}
and
\begin{eqnarray*}
\widetilde{\varphi} _{n,2}(x)=\left\{
\begin{array}{l}
-\alpha'_{10}\sqrt{p^{-}}\left[\frac{\sqrt{p^{+}}(n-2)\pi}{(b-c)}\right]\sin
\left[\frac{\sqrt{p^{+}}
  (n-2)\pi
 (x-a)}{\sqrt{p^{-}}(b-c)}\right]+O\left(1
\right),\textrm{ \ for }x\in \lbrack a,c)
\\
- \frac{\Delta_{24}\alpha' _{10}p^{+}}{\Delta_{12}} \left[\frac{
  (n-2)\pi}{(b-c)}\right]^{2} \cos\left[ \frac{\sqrt{p^{+}}
  (n-2)\pi
 (c-a)}{\sqrt{p^{-}}(b-c)}\right]\cos \left[
  \frac{(n-2)\pi
 (x-c)}{(b-c)}\right]
\nonumber \\
+O\left( n\right),\textrm{ for }x\in (c,b]
\end{array}\right.
\end{eqnarray*}
 If $\alpha _{21} ^{\prime }= 0$ and $\alpha _{11}\neq
0$, then
\begin{eqnarray*}
\widetilde{\varphi} _{n,1}(x)=\left\{
\begin{array}{ll}
\begin{array}{l}
-\alpha'_{11}p^{-}\left[\frac{
 (n-2)\pi}{(c-a)} \right]^{2} \cos \left[
  \frac{
 (n-2)\pi(x-a)}{(c-a)}\right]+O\left(n
\right),
\end{array}
\begin{array}{l}
\textrm{ for }x\in \lbrack a,c)
\end{array}
\\
\frac{-\Delta_{24}\alpha' _{11}p^{-}}{\Delta_{12}} \left[\frac{
 (n-2)\pi}{(c-a)} \right]^{3} \sin \left[
  (n-2)\pi\right]\cos \left[
  \frac{\sqrt{p^{-}}(n-2)\pi
 (x-c)}{\sqrt{p^{+}}(c-a)}\right]
\nonumber
\\
+O\left(n^{2} \right),  \textrm{ for }x\in (c,b]
\end{array}
\right.
\end{eqnarray*}
and
\begin{eqnarray*}
\widetilde{\varphi} _{n,2}(x)=\left\{
\begin{array}{l}
-\alpha'_{11}p^{+}\left[\frac{
 (2n+1)\pi}{2(b-c)} \right]^{2} \cos \left[
  \frac{\sqrt{p^{+}}
 (2n+1)\pi(x-a)}{2\sqrt{p^{-}}(b-c)}\right]+O\left(n
\right),\textrm{ \ for }x\in \lbrack a,c)
\\
-\frac{\Delta_{24}\alpha' _{11}}{\Delta_{12}\sqrt{p^{-}}}
\left[\frac{
 \sqrt{p^{+}}(2n+1)\pi}{2(b-c)} \right]^{3} \sin \left[
  \frac{
 \sqrt{p^{+}}(2n+1)\pi(c-a)}{2\sqrt{p^{-}}(b-c)}\right]\cos \left[
  \frac{
(2n+1)\pi(x-c)}{2(b-c)}\right] \nonumber
\\
+O\left(n^{2} \right),\textrm{ for }x\in (c,b]
\end{array}
\right.
\end{eqnarray*}
 If $\alpha _{21} ^{\prime }= 0$ and $\alpha _{11}= 0$,
then
\begin{eqnarray*}
\widetilde{\varphi} _{n,1}(x)=\left\{
\begin{array}{ll}
\begin{array}{l}
-\alpha'_{10}p^{-}\left[\frac{
 (2n-3)\pi}{2(c-a)} \right]\sin \left[
  \frac{
 (2n-3)\pi(x-a)}{2(c-a)}\right]+O\left(1
\right)
\end{array}
\begin{array}{l}
\textrm{ for }x\in \lbrack a,c)
\end{array}
\\
- \frac{ \Delta_{24}p^{-}\alpha' _{10}}{\Delta_{12}}\left[\frac{
 (2n-3)\pi}{2(c-a)} \right]^{2} \cos
  \left[\frac{
 (2n-3)\pi}{2} \right]\cos \left[
  \frac{\sqrt{p^{-}}
 (2n-3)\pi(x-c)}{2\sqrt{p^{+}}(c-a)} \right]
\nonumber
\\
+O\left(1 \right), \ \textrm{ for } \ x\in (c,b]
\end{array}
\right.
\end{eqnarray*}
and
\begin{eqnarray*}
\widetilde{\varphi} _{n,2}(x)=\left\{
\begin{array}{ll}
\begin{array}{l}
-\alpha'_{10}\sqrt{p^{-}}\left[\frac{\sqrt{p^{+}}
(2n+1)\pi}{2(b-c)}\right]\sin\left[\frac{\sqrt{p^{+}}
 (2n+1)\pi(x-a)}{2\sqrt{p^{-}}(b-c)}\right]+O\left(1
\right),
\end{array}
\begin{array}{l}
\textrm{for} \ x\in \lbrack a,c)
\end{array}
\\
- \frac{ \Delta_{24}p^{+}\alpha' _{10}}{\Delta_{12}}\left[\frac{
 (2n+1)\pi}{2(b-c)} \right]^{2} \cos
  \left[\frac{\sqrt{p^{+}}
 (2n+1)\pi(c-a)}{2\sqrt{p^{-}}(b-c)} \right]\cos \left[
  \frac{
 (2n+1)\pi(x-c)}{2(b-c)} \right]\nonumber\\
+O\left(\frac{1}{n} \right), \ \textrm{for} \ x\in (c,b]
\end{array}
\right.
\end{eqnarray*}
All this asymptotic approximations are hold uniformly for $x.$
 \textbf{Example.} Consider the
following simple case of the BVTP's $(\ref{1})-(\ref{3})$
\begin{equation}\label{ex}
-y^{\prime \prime }(x)=\mu^{2}y(x) \ \ \ \ x\in [-\pi,0)\cup(0,\pi]
\end{equation}
\begin{equation}\label{ex1}
y(-\pi)+\mu^{2} y'(-\pi)=0,
\end{equation}
\begin{equation}\label{ex2}
\mu^{2} y(\pi)+y'(\pi)=0,
\end{equation}
\begin{equation}\label{ex3}
y(0-)=2y(+0), \  \ y'(-0)=y'(+0)
\end{equation}

\begin{minipage}{0.5\textwidth}
  \includegraphics[height=5cm,width=10cm]{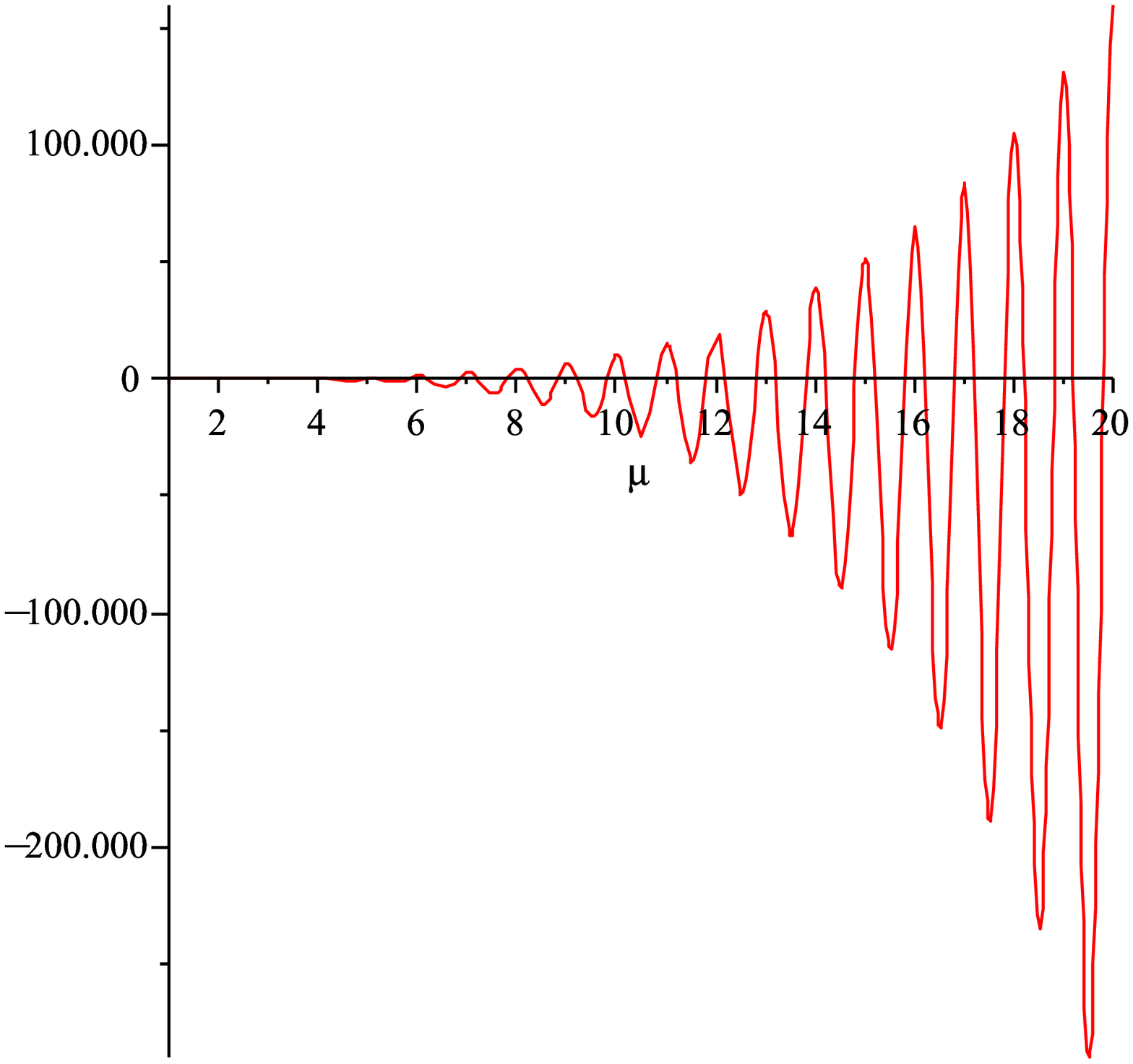}
\verb Figure 1: Graph of the function $w(\mu)$ for real $\mu$
\end{minipage}\\
\begin{minipage}{0.5\textwidth}
  \includegraphics[height=5cm,width=7cm]{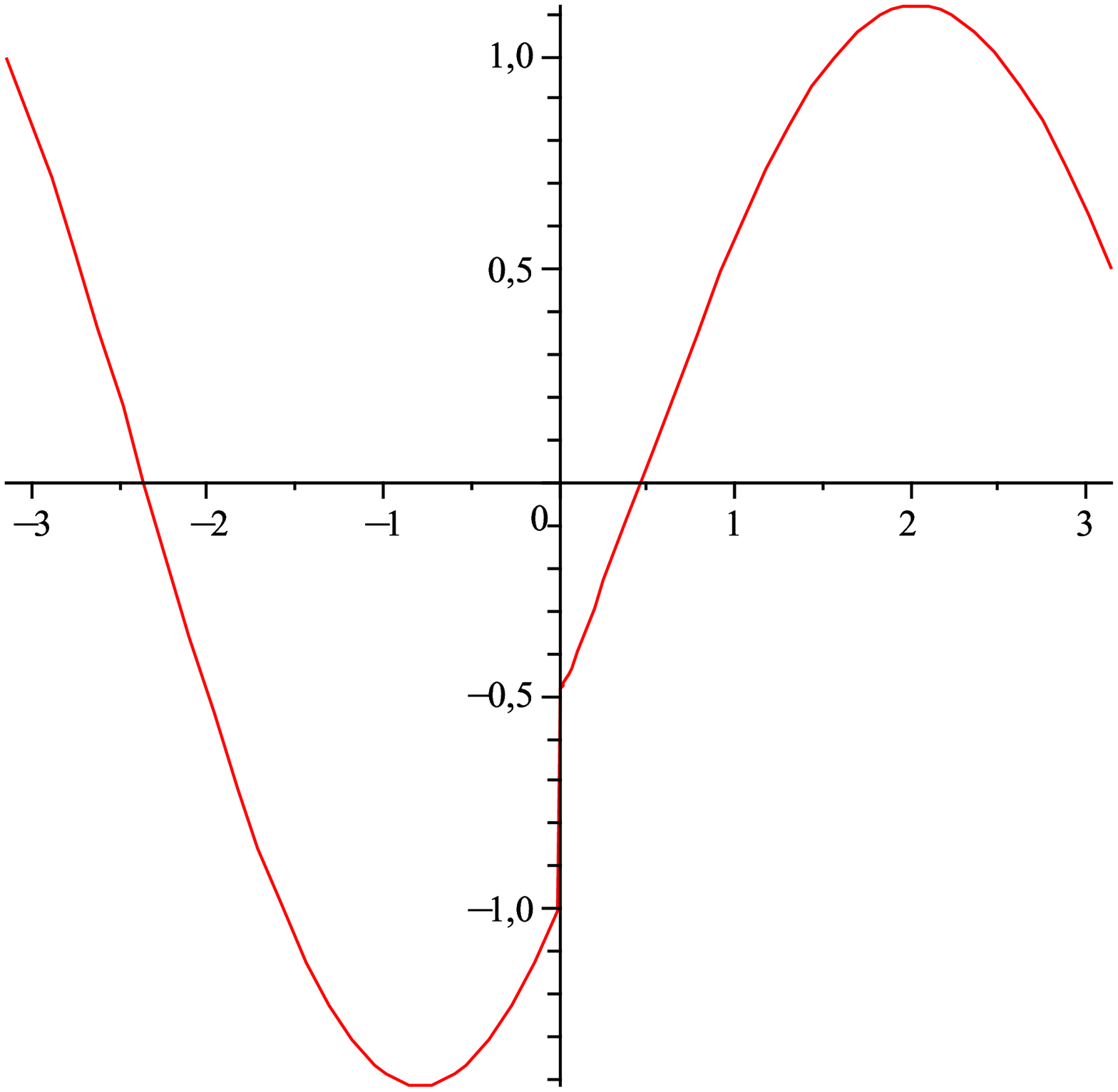}
\verb Figure \ 2:Graph of the solution  $\phi(x,\mu)$  for $\mu=1$
\end{minipage}
\begin{minipage}{0.5\textwidth}
  \includegraphics[height=5cm,width=7cm]{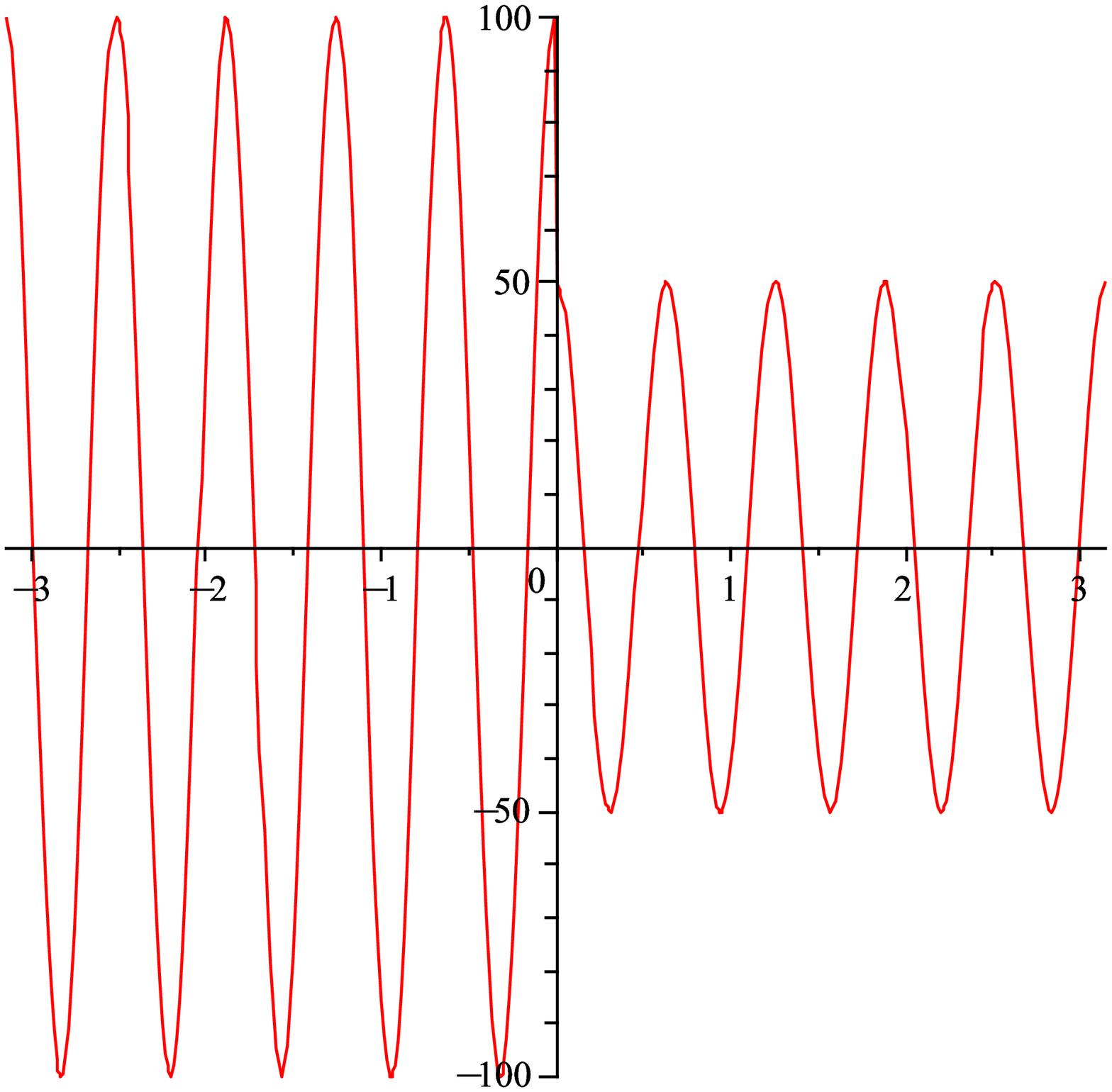}
\verb Figure \ 3:Graph of the solution $\phi(x,\mu)$  for $\mu=10$
\end{minipage}

\section*{Acknowledgement}
The authors grateful for financial support from the research fund of
Gaziosmanpa\c{s}a University under grant no:\ $2012\backslash 126.$

\end{document}